\documentclass[11pt]{amsart}

\usepackage{lmodern}
\usepackage{microtype}
\usepackage[english]{babel}
\usepackage{mathtools}
\usepackage{hyperref}
\usepackage[hyperpageref]{backref}
\usepackage[msc-links]{amsrefs}
\usepackage{mathrsfs}

\theoremstyle{plain} 
\newtheorem{theorem}{Theorem}
\newtheorem{lemma}[theorem]{Lemma}
\newtheorem{corollary}[theorem]{Corollary}

\theoremstyle{definition} 
\newtheorem*{problem}{Problem}

\DeclareMathOperator{\mre}{Re} 

\newcommand{\sigmaa}{\sigma_{\mathrm{a}}}
\newcommand{\sigmac}{\sigma_{\mathrm{c}}}
\newcommand{\sigmam}{\sigma_{\mathrm{m}}}

\begin{document} 
\title[Mean values, order, and convergence in $\mathscr{H}^2$]{Mean values, order, and convergence in the Hardy space of Dirichlet series}

\begin{abstract}
	Let $\mathscr{H}^2$ denote the Hilbert space of Dirichlet series with square-summable coefficients. It follows from a theorem of Bohr that if $f$ in $\mathscr{H}^2$ has a bounded analytic continuation to the right half-plane, then its norm can be computed from the mean values
	\[\|f\|_{\mathscr{H}^2}^2 = \lim_{\sigma\to 0^+} \lim_{T\to\infty} \frac{1}{2T} \int_{-T}^T \lvert f(\sigma+it) \rvert^2\,dt.\]
	We investigate to what extent the assumption that the analytic continuation be bounded can be relaxed, and obtain optimal results in two different directions. The first direction concerns the order of the analytic continuation, and yields as a corollary a version of the classical Landau--Schnee convergence theorem for $\mathscr{H}^2$. The second direction concerns the abscissa of convergence, and resolves a problem posed by McCarthy.
\end{abstract}

\date{\today}

\subjclass{Primary 30B50. Secondary 30H10, 30D15.}

\thanks{Brevig is supported by Grant 354537 of the Research Council of Norway. Kouroupis is supported by Grant 1203126N of the Research Foundation -- Flanders (FWO)}

\author{Ole Fredrik Brevig} 
\address{Department of Mathematical Sciences, Norwegian University of Science and Technology (NTNU), 7491 Trondheim, Norway} 
\email{ole.brevig@ntnu.no}

\author{Athanasios Kouroupis} 
\address{Department of Mathematics, KU Leuven, Celestijnenlaan 200B, 3001, Leuven, Belgium} 
\email{athanasios.kouroupis@kuleuven.be}

\maketitle

\section{Introduction}
Let $\mathscr{H}^2$ denote the Hilbert space of Dirichlet series with square-summable coefficients. It is well known that $\mathscr{H}^2$ consists of analytic functions in the half-plane $\mathbb{C}_{\frac{1}{2}}$, where $\mathbb{C}_\kappa = \{s = \sigma+it\,:\, \sigma>\kappa\}$, and that there are elements in $\mathscr{H}^2$ that do not admit analytic continuations to any larger domain (see e.g.~\cite{QQ2020}*{Corollary~8.4.1}).

If the Dirichlet series $f(s) = \sum_{n\geq1} a_n n^{-s}$ converges uniformly in $\mathbb{C}_\kappa$, then a direct calculation yields the mean value formula
\begin{equation} \label{eq:mvf}
    \lim_{T\to\infty} \frac{1}{2T} \int_{-T}^T \lvert f(\sigma+it) \rvert^2 \,dt = \sum_{n=1}^\infty \lvert a_n \rvert^2 n^{-2\sigma}
\end{equation}
for every $\sigma>\kappa$. Bohr's theorem~\cite{Bohr1913} asserts that if a somewhere convergent Dirichlet series has a bounded analytic continuation to $\mathbb{C}_\kappa$, then it converges uniformly in $\mathbb{C}_{\kappa+\varepsilon}$ for every $\varepsilon>0$. 

It follows that if an element $f$ of $\mathscr{H}^2$ has a bounded analytic continuation to the right half-plane $\mathbb{C}_0$, then it enjoys the norm formula
\begin{equation} \label{eq:H2norm}
    \|f\|_{\mathscr{H}^2}^2 = \lim_{\sigma \to 0^+} \lim_{T\to\infty} \frac{1}{2T} \int_{-T}^T \lvert f(\sigma+it)\rvert^2\,dt.
\end{equation}
The main motivation behind the present paper is to investigate to what extent the assumption that the analytic continuation be bounded in $\mathbb{C}_0$ may be replaced by a weaker assumption which still ensures that \eqref{eq:H2norm} holds.

We begin with the Lindel\"of order function, which is defined for an analytic function of finite order in $\mathbb{C}_\kappa$ by
\[\mu_f(\sigma) = \limsup_{\lvert t \rvert\to\infty} \frac{\log\lvert f(\sigma+it) \rvert}{\log \lvert t \rvert}\]
for $\sigma>\kappa$. We shall frequently use that $\mu_f$ is convex on $(\kappa,\infty)$. Let $\sigmac(f)$ and $\sigmaa(f)$ denote, respectively, the abscissa of convergence and the abscissa of absolute convergence for the Dirichlet series $f$. If $f \not \equiv 0$, then $\mu_f(\sigma)=0$ for $\sigma>\sigmaa(f)$ and  $\mu_f(\sigma) \leq 1$ for $\sigma>\sigmac(f)$. From the former and convexity, it follows that $\mu_f$ is decreasing on $(\kappa,\infty)$ if $f$ has an analytic continuation of finite order to $\mathbb{C}_\kappa$.

It will be convenient to consider another abscissa that was introduced by Titchmarsh~\cite{Titchmarsh1958}*{\S9.51}. For a somewhere convergent Dirichlet series $f$, let $\sigmam(f)$ denote the infimum of those $\kappa$ such that $f$ has an analytic continuation of finite order to $\mathbb{C}_\kappa$, and such that the mean value formula \eqref{eq:mvf} holds for every $\sigma>\kappa$, in the sense that both sides are finite and equal.

If $\sigmam(f) \leq 0$ for $f$ in $\mathscr{H}^2$, then \eqref{eq:H2norm} follows from \eqref{eq:mvf} by the monotone convergence theorem as $\sigma\to0^+$. Our question may therefore be formulated succinctly as follows: Under what assumptions on a Dirichlet series $f$ in $\mathscr{H}^2$ with analytic continuation to $\mathbb{C}_0$ does it hold that $\sigmam(f) \leq 0$? The two arcs of this paper arise from two different approaches to this question: the first via order and the second via convergence.

It is not difficult to establish that if $\sigma>\sigmam(f)$, then $\mu_f(\sigma) \leq \frac{1}{2}$ (see e.g.~\cite{Titchmarsh1958}*{\S9.55}). In particular, we find that $\mu_f(\sigma) \leq \frac{1}{2}$ for $\sigma>0$ is a necessary condition for the norm formula \eqref{eq:H2norm}. In the first arc, we will investigate what we can say about $\sigmam(f)$ if $\frac{1}{2}$ is replaced by some $\beta>0$.

\begin{theorem} \label{thm:betayes}
    Fix $\beta>0$. If an element $f$ of $\mathscr{H}^2$ has an analytic continuation to $\mathbb{C}_0$ such that $\mu_f(\sigma) \leq \beta$ for every $\sigma>0$, then 
    \[\sigmam(f) \leq \frac{\beta}{1+2\beta}.\]
\end{theorem}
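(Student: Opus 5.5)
The plan is to prove directly that the mean value formula \eqref{eq:mvf} holds for every $\sigma$ with $\frac{\beta}{1+2\beta}<\sigma<\frac12$; for $\sigma>\frac12$ it is immediate from absolute convergence. Fix such a $\sigma$ and small $\delta,\varepsilon>0$. The hypothesis $\mu_f\leq\beta$ on $(0,\infty)$, together with Phragm\'en--Lindel\"of in a strip, gives
\[
|f(\delta+it)|\ll_{\delta,\varepsilon}(1+|t|)^{\beta+\varepsilon}
\]
uniformly on the line $\mre s=\delta$ and in the strip $\delta\le\mre s\le 2$. This is the only way the hypothesis enters. Note also that, since $(a_n)\in\ell^2$, the series converges absolutely in $\mathbb{C}_{1/2}$.

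The first step is a smoothed approximate formula. For $s=\sigma+it$ and a parameter $X=X(T)$, write
\[
f(s)=\sum_{n\ge1}a_n n^{-s}e^{-n/X} - \frac{1}{2\pi i}\int_{(\delta-\sigma)} f(s+w)\,\Gamma(w)X^w\,dw
=: D_X(s)+R_X(s).
\]
This comes from the Mellin identity $e^{-y}=\frac{1}{2\pi i}\int_{(c)}\Gamma(w)y^{-w}\,dw$ applied on $\mre(s+w)>1$. One then shifts the contour to $\mre w=\delta-\sigma$, picking up the residue $f(s)$ at $w=0$. The shift is justified by the exponential decay of $\Gamma$ against the polynomial growth of $f$. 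On the new line, for $|t|\le 2T$,
\[
|R_X(\sigma+it)|\ll X^{\delta-\sigma}T^{\beta+\varepsilon},
\]
where the part of the integral with $|\mre\text{-}\,\mathrm{Im}\, w|>T$ is killed by the decay of $\Gamma$. Hence
\[
\int_{-T}^T|R_X|^2\,dt\ll T^{1+2\beta+2\varepsilon}X^{2\delta-2\sigma}.
\]

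The second step is to control the smoothed Dirichlet series. By the Montgomery--Vaughan mean value theorem, applied to $D_X$ (first truncated at $n\le XT^{\varepsilon}$, with a negligible tail),
\[
\int_{-T}^T|D_X(\sigma+it)|^2\,dt=\sum_n|a_n|^2n^{-2\sigma}e^{-2n/X}\bigl(2T+O(n)\bigr).
\]
The error term here is bounded by
\[
\sum_{n\le XT^\varepsilon}|a_n|^2 n^{1-2\sigma}\le (XT^{\varepsilon})^{1-2\sigma}\|f\|_{\mathscr{H}^2}^2 .
\]
Thus the main term is $2T\sum|a_n|^2n^{-2\sigma}(1+o(1))$ as $X\to\infty$, by monotone convergence.

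Now combine the two steps. We need both $X^{1-2\sigma}=o(T^{1-\varepsilon'})$ and $T^{2\beta}X^{-2\sigma}=o(T^{-\varepsilon'})$, that is,
\[
T^{\beta/\sigma+\eta}\le X\le T^{1/(1-2\sigma)-\eta}.
\]
Such an $X$ exists precisely when $\beta/\sigma<1/(1-2\sigma)$, which is equivalent to $\sigma>\frac{\beta}{1+2\beta}$, after choosing $\delta,\varepsilon,\eta$ small. Then
\[
\int_{-T}^T|f|^2=\int_{-T}^T|D_X|^2+o(T),
\]
where the cross term is handled by Cauchy--Schwarz. Dividing by $2T$ yields \eqref{eq:mvf} at $\sigma$. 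Together with the finite order of $f$, this gives $\sigmam(f)\le\frac{\beta}{1+2\beta}$.

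The main obstacle I expect is making the bound on $R_X$ uniform. The order hypothesis is only a $\limsup$ as $|t|\to\infty$ on each individual line. To get a bound $\ll(1+|t|)^{\beta+\varepsilon}$ valid uniformly on $\mre s=\delta$, including bounded $|t|$, and with $f$ of finite order in the strip so that contour shifting is legitimate, one needs a careful Phragm\'en--Lindel\"of argument. A secondary technical point is the bookkeeping of the truncation in the Montgomery--Vaughan step.
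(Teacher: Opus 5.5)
Your proposal is correct and follows the paper's proof essentially step for step. Both arguments smooth with $e^{-n/X}$ via the Cahen--Mellin formula (the paper's $\delta=T^{-\alpha}$ is your $X^{-1}$), shift the contour to bound $f-f_\delta$ using the order hypothesis, apply Montgomery--Vaughan to the smoothed series, and balance the two resulting constraints on $X=T^\alpha$, which is possible exactly when $\sigma>\frac{\beta}{1+2\beta}$.

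Two small points. You omit $\sigma=\frac12$, where the series need not converge absolutely, but your argument covers it verbatim: the Montgomery--Vaughan error is then $O(\|f\|_{\mathscr{H}^2}^2)$ and imposes no upper constraint on $X$. Your truncation before applying Montgomery--Vaughan is also unnecessary, since $n^{1-2\sigma}e^{-2n/X}\le X^{1-2\sigma}$ directly.
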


The proof of Theorem~\ref{thm:betayes} uses the Cahen--Mellin formula, and the assumption that $f$ belongs to $\mathscr{H}^2$ enters via the Montgomery--Vaughan inequality.

Recall that an analytic function $f$ is said to have order $0$ in $\mathbb{C}_\kappa$ if $\mu_f(\sigma) \leq 0$ for every $\sigma>\kappa$. We mentioned above that the norm formula \eqref{eq:H2norm} holds if $f$ in $\mathscr{H}^2$ has a bounded analytic continuation to $\mathbb{C}_0$. The following consequence of Theorem~\ref{thm:betayes} relaxes that assumption.

\begin{corollary} \label{cor:zeroorder}
    If $f$ in $\mathscr{H}^2$ has an analytic continuation to $\mathbb{C}_0$ of order $0$, then 
    \[\|f\|_{\mathscr{H}^2}^2 = \lim_{\sigma \to 0^+} \lim_{T\to\infty} \frac{1}{2T} \int_{-T}^T \lvert f(\sigma+it)\rvert^2\,dt.\]
\end{corollary}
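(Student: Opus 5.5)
The plan is to deduce the corollary from Theorem~\ref{thm:betayes} by letting $\beta$ tend to $0$. Suppose $f$ in $\mathscr{H}^2$ has an analytic continuation to $\mathbb{C}_0$ of order $0$, so that $\mu_f(\sigma) \leq 0$ for every $\sigma>0$. Then for every $\beta>0$ we trivially have $\mu_f(\sigma)\leq \beta$ for every $\sigma>0$. Theorem~\ref{thm:betayes} therefore gives
\[\sigmam(f) \leq \frac{\beta}{1+2\beta}\]
for every $\beta>0$. Letting $\beta\to0^+$, we conclude that $\sigmam(f)\leq 0$.

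It remains to pass from $\sigmam(f)\leq0$ to the norm formula, exactly as explained in the introduction. Since $\sigmam(f)\leq 0$, the definition of $\sigmam$ gives, for every $\sigma>0$, that the mean value formula \eqref{eq:mvf} holds with both sides finite and equal. That is,
\[\lim_{T\to\infty}\frac{1}{2T}\int_{-T}^T \lvert f(\sigma+it)\rvert^2\,dt = \sum_{n=1}^\infty \lvert a_n\rvert^2 n^{-2\sigma}.\]
As $\sigma\to0^+$, each term $\lvert a_n\rvert^2 n^{-2\sigma}$ increases to $\lvert a_n\rvert^2$. By the monotone convergence theorem for series, the right-hand side therefore tends to $\sum_{n\geq1}\lvert a_n\rvert^2=\|f\|_{\mathscr{H}^2}^2$, and this limit is finite because $f\in\mathscr{H}^2$. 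This yields \eqref{eq:H2norm}.

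There is no genuine obstacle, since all the analytic work is contained in Theorem~\ref{thm:betayes}. The only point to check is the definition of $\sigmam$ as an infimum: knowing only that $\sigmam(f)\leq0$ must still give \eqref{eq:mvf} for every $\sigma>0$. This holds because, for any $\sigma>0\geq\sigmam(f)$, the definition of the infimum provides some admissible $\kappa<\sigma$, and admissibility of $\kappa$ means \eqref{eq:mvf} holds at $\sigma$.
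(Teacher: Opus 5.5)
Your proposal is correct and follows the paper's own route: apply Theorem~\ref{thm:betayes} for every $\beta>0$ to get $\sigmam(f)\leq 0$, then obtain the norm formula from \eqref{eq:mvf} by monotone convergence as $\sigma\to0^+$, as described in the introduction. Your check of the infimum in the definition of $\sigmam$ is the right detail to verify, and it is handled correctly.
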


To explore further consequences of Theorem~\ref{thm:betayes}, we recall that the three abscissae satisfy
\begin{equation} \label{eq:titchmarsh}
    \sigmac(f) \leq \sigmam(f) \leq \sigmaa(f).
\end{equation}
The first inequality in this chain is due to Titchmarsh~\cite{Titchmarsh1958}*{\S9.55}, while the second is trivial. Fix $\beta>0$. A classical convergence criterion due to Landau and Schnee~\cite{Landau1953}*{Satz~54 in \S238} implies that if $f(s)=\sum_{n\geq1} a_n n^{-s}$ with $a_n = O(n^{\varepsilon})$ for every $\varepsilon>0$ has an analytic continuation to $\mathbb{C}_0$ such that $\mu_f(\sigma) \leq \beta$ for $\sigma>0$, then 
\begin{equation} \label{eq:slbeta}
    \sigmac(f) \leq \frac{\beta}{1+\beta}.
\end{equation}
Strengthening the assumption to square-summable coefficients, we obtain the following version of the Landau--Schnee theorem from \eqref{eq:titchmarsh} and Theorem~\ref{thm:betayes}.

\begin{corollary}[Landau--Schnee in $\mathscr{H}^2$] \label{cor:schneelandau}
    Fix $\beta>0$. If $f$ in $\mathscr{H}^2$ has an analytic continuation to $\mathbb{C}_0$ with $\mu_f(\sigma) \leq \beta$ for $\sigma>0$, then 
    \[\sigmac(f) \leq \frac{\beta}{1+2\beta}.\]
\end{corollary}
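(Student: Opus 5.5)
This corollary is an immediate consequence of Theorem~\ref{thm:betayes} combined with the first inequality in Titchmarsh's chain \eqref{eq:titchmarsh}. The plan is to check that the hypotheses of the corollary are exactly those of Theorem~\ref{thm:betayes}, and then pass from $\sigmam$ to $\sigmac$.

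First, let $f$ belong to $\mathscr{H}^2$ and have an analytic continuation to $\mathbb{C}_0$ with $\mu_f(\sigma)\leq\beta$ for every $\sigma>0$. This is verbatim the hypothesis of Theorem~\ref{thm:betayes}, so that theorem gives $\sigmam(f)\leq \frac{\beta}{1+2\beta}$.

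Second, I would apply Titchmarsh's inequality $\sigmac(f)\leq\sigmam(f)$ from \eqref{eq:titchmarsh}. This requires $f$ to be a somewhere convergent Dirichlet series. That holds because $f\in\mathscr{H}^2$ has square-summable coefficients, so by Cauchy--Schwarz its Dirichlet series converges absolutely in $\mathbb{C}_{\frac12}$, i.e.\ $\sigmaa(f)\leq\frac12$. Hence $\sigmam(f)$ is well defined and the chain \eqref{eq:titchmarsh} applies. Combining the two inequalities yields $\sigmac(f)\leq\frac{\beta}{1+2\beta}$.

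There is no real obstacle here. The only point needing care is that Titchmarsh's inequality is applied in the setting where it was stated, namely to a somewhere convergent Dirichlet series with $\sigmam$ defined as in the paper. The substantive work, via the Cahen--Mellin formula and the Montgomery--Vaughan inequality, is entirely contained in Theorem~\ref{thm:betayes}.
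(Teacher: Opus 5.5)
Your proposal is correct and follows the paper's own route exactly: Theorem~\ref{thm:betayes} gives $\sigmam(f)\leq\frac{\beta}{1+2\beta}$, and Titchmarsh's inequality $\sigmac(f)\leq\sigmam(f)$ from \eqref{eq:titchmarsh} then gives the bound on $\sigmac(f)$. Your remark that $\sigmaa(f)\leq\frac{1}{2}$ for $f$ in $\mathscr{H}^2$, so that $f$ is somewhere convergent and $\sigmam(f)$ is defined, is a fine detail to make explicit.
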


We now turn to the question of whether $\frac{\beta}{1+2\beta}$ can be improved in Theorem~\ref{thm:betayes} and Corollary~\ref{cor:schneelandau}. Here we take our inspiration from Bohr~\cite{Bohr1949}*{\S2}, who constructed an entire function that can be represented by a Dirichlet series $f(s) = \sum_{n\geq1} a_n n^{-s}$ with $a_n = O(n^{\varepsilon})$ for every $\varepsilon>0$ satisfying
\[\sigmac(f) = \sigmaa(f) = \frac{\beta}{1+\beta}\]
and whose order function is $\mu_f(\sigma) = \beta-(1+\beta)\sigma$ for $\sigma \leq \frac{\beta}{1+\beta}$. In particular, this demonstrates the optimality of \eqref{eq:slbeta}. Tweaking Bohr's construction to ensure that the coefficients are square-summable has the following effect.

\begin{theorem} \label{thm:betano}
    Fix $\beta>0$. There is an entire function $f$ that belongs to $\mathscr{H}^2$ with
    \[\sigmac(f) = \sigmaa(f) = \frac{\beta}{1+2\beta}\]
    and such that $\mu_f(\sigma) = \beta-(1+2\beta)\sigma$ for $\sigma \leq \frac{\beta}{1+2\beta}$.
\end{theorem}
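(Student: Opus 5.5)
We will mimic Bohr's construction from \cite{Bohr1949}*{\S2}, placing suitably normalized blocks of coefficients on dyadic-type intervals. Write $\alpha = \frac{\beta}{1+2\beta}$. Choose a rapidly increasing sequence $N_k$ (say $N_k = 2^{2^k}$ or faster) and put
\[f(s) = \sum_{k\geq1} N_k^{-1/2-\alpha}\, \frac{1}{\sqrt{M_k}}\sum_{N_k< n\leq N_k+M_k} n^{-s}\cdot N_k^{\alpha}\cdot c_k,\]
where the block length is $M_k = N_k^{\theta}$ with $\theta$ to be fixed, and the normalization is chosen as follows. Each block is $P_k(s)=\sum_{N_k<n\leq N_k+M_k} n^{-s}$ multiplied by $b_k = N_k^{\alpha}M_k^{-1/2}k^{-1}$. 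The coefficients then satisfy $\sum|a_n|^2 = \sum_k b_k^2 M_k = \sum_k N_k^{2\alpha}k^{-2}$. This is not summable, so instead take $b_k = M_k^{-1/2}k^{-1}$. Then $f\in\mathscr{H}^2$, while $\sigmaa(f)$ is determined by $\sum_k b_k M_k N_k^{-\sigma} = \sum_k k^{-1}M_k^{1/2}N_k^{-\sigma}$, giving $\sigmaa = \theta/2$. Similarly, $\sigmac$ is governed by the size of the block sums on the real axis, which is comparable to $b_k M_k N_k^{-\sigma}$ for $\sigma$ near the boundary. Hence $\sigmac=\sigmaa=\theta/2$, and we need $\theta = 2\alpha = \frac{2\beta}{1+2\beta}$.

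The analytic continuation and the order come from rewriting each block via Euler--Maclaurin, or simply from the fact that a finite block is entire. The issue is uniform control, which is the main step. For $|t|\lesssim N_k/M_k$ the block behaves like $N_k^{-s}M_k\cdot(\text{sinc-type factor})$: we have $n^{-it}\approx N_k^{-it}e^{-it(n-N_k)/N_k}$, so $|P_k(\sigma+it)|\approx N_k^{-\sigma}\min(M_k, N_k/|t|)$. For $|t|\gg N_k/M_k$ we instead use the van der Corput/Kusmin--Landau bound $|P_k|\lesssim N_k^{-\sigma}(N_k/|t| + \sqrt{|t|})$, or the trivial bound $N_k^{-\sigma}M_k$. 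One then checks that the whole series $\sum_k b_kP_k(s)$ converges locally uniformly in $\mathbb{C}$ (for fixed $s$, once $N_k\gg|t|$ the terms are $\lesssim k^{-1}M_k^{-1/2}N_k^{1-\sigma}/|t|$; convergence for $\sigma\leq 0$ requires exploiting cancellation in the sinc factor more carefully, or choosing blocks with smooth weights so that the factor decays faster than any power of $N_k/|t|$). I would therefore use smoothed blocks $a_n = b_k\,w\bigl((n-N_k)/M_k\bigr)$ with $w$ a bump function. Poisson summation then gives $P_k(s)=N_k^{-s}M_k\widehat{w}(tM_k/N_k)+$ (rapidly small terms), so the series is entire, and the rapid decay of $\widehat w$ controls the continuation.

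For the order: at height $t$, the dominant $k$ is the one with $N_k/M_k\approx|t|$, i.e.\ $N_k^{1-\theta}\approx|t|$. That term has size $b_kM_kN_k^{-\sigma}=M_k^{1/2}N_k^{-\sigma}k^{-1} = N_k^{\theta/2-\sigma}=|t|^{(\theta/2-\sigma)/(1-\theta)}$. Since $1-\theta = \frac{1}{1+2\beta}$, we get $\mu_f(\sigma)=(1+2\beta)(\alpha-\sigma)=\beta-(1+2\beta)\sigma$, as desired. Choosing $t$ exactly at the points $N_k/M_k$ (with $N_k$ sparse enough that other blocks are negligible there) gives the lower bound, and the upper bound follows from the Poisson estimate summed over $k$. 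The main obstacle is making the Poisson/stationary-phase error terms uniformly negligible for $|t|\gg N_k/M_k$ and all $\sigma\leq\alpha$. If smoothing there yields only bounds of the form $N_k^{1/2-\sigma}$, which are too large for $\sigma<1/2$, I would fall back on Bohr's original device of using $n^{-s}$ weighted by exponentials $e^{-n/\cdots}$ and adapt his estimates directly.
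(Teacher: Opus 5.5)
Your parameters are the right ones: $M_k=N_k^{2\beta/(1+2\beta)}$ and the weight $k^{-1}M_k^{-1/2}$ correspond to the paper's $d_m$ and $1/(m2^m\sqrt{d_m})$. The blocks themselves, however, cannot produce an entire function, so there is a genuine gap at exactly the step you flagged.

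With sharp blocks, or with a bump $w\geq0$, all coefficients are nonnegative. Landau's theorem on Dirichlet series with nonnegative coefficients then forces a singularity at the real point $s=\sigmac(f)=\frac{\beta}{1+2\beta}$. Your Poisson formula shows the same thing. For fixed $s=\sigma+it$ and $k\to\infty$, the argument $tM_k/N_k$ tends to $0$, so the main term is essentially $\widehat{w}(0)M_kN_k^{-s}$. Hence $\lvert b_kP_k(s)\rvert\approx k^{-1}\lvert\widehat{w}(0)\rvert N_k^{\frac{\beta}{1+2\beta}-\sigma}$, which is unbounded for $\sigma<\frac{\beta}{1+2\beta}$. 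There is no cancellation in the sinc factor to exploit, and your bound $k^{-1}M_k^{-1/2}N_k^{1-\sigma}/\lvert t\rvert$ is weaker than the trivial one in this range.

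In other words, the property you use to get $\sigmac(f)\geq\frac{\beta}{1+2\beta}$ (large block sums on the real axis) is precisely what prevents continuation. A signed weight does not rescue this. If the first $j$ moments of $w$ vanish but the next does not, the blocks have size about $\lvert s\rvert^jk^{-1}N_k^{\frac{\beta-j}{1+2\beta}-\sigma}$, so you only reach $\mathbb{C}_{\frac{\beta-j}{1+2\beta}}$. A compactly supported bump cannot have all its moments vanish.

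The missing idea is cancellation whose order grows with the block. This is Bohr's actual device, which uses finite differences rather than exponential weights. The $m$th brick is $P_m(s)=\sum_{N_m\leq n<N_m+d_m}\Delta_{d_m}^m(n^{-s})$. The integral representation of $\Delta_{d_m}^m$ gives
$\lvert P_m(s)\rvert\leq 2^{m+1}\bigl(\prod_{k=0}^{r-1}\lvert s+k\rvert\bigr)N_m^{\frac{2\beta-r}{1+2\beta}-\sigma}$
for every $r\leq m$ with $\sigma+r>0$. After normalization, the tail $\sum_{m\geq r}$ converges in $\mathbb{C}_{\frac{\beta-r}{1+2\beta}}$ with $\mu_f\leq r$ there. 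Since this holds for every $r$, the function is entire.

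Because these bricks are tiny on the real axis, $\sigmac(f)\geq\frac{\beta}{1+2\beta}$ cannot come from full block sums. The paper instead reads it off from partial sums over the first run $N_m\leq n<N_m+d_m$, the $j=0$ terms, which all share one positive coefficient.

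Finally, the paper does not locate a dominant block at each height. It combines three facts with convexity:
$\mu_f(\frac{\beta-r}{1+2\beta})\leq r$, $\mu_f(\frac{\beta}{1+2\beta})=0$, and $\mu_f(0)\geq\beta$.
The last bound comes by contradiction from Corollary~\ref{cor:schneelandau}. Your direct lower bound at $t\approx N_k/M_k$ would additionally require controlling every other brick at that height, and your sketch does not do this.
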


Theorem~\ref{thm:betano} demonstrates that Corollary~\ref{cor:schneelandau} cannot be improved. It also yields the optimality of Theorem~\ref{thm:betayes} and Corollary~\ref{cor:zeroorder} via \eqref{eq:titchmarsh}, and this concludes the first arc of our paper.

For the second arc of our paper, we return to the norm formula \eqref{eq:H2norm} for some $f$ in $\mathscr{H}^2$. If it holds, then plainly $\sigmac(f) \leq 0$ by \eqref{eq:titchmarsh}. We now wish to understand what happens if the condition $\sigmac(f) \leq 0$ is replaced by $\sigmac(f) \leq \theta$ for some $\theta$. A result due to Landau~\cite{Landau1953}*{Satz~41 in \S226} (see also~\cite{BK2024}*{Section~3}) asserts that
\begin{equation} \label{eq:landau}
    \sigmam(f) \leq \frac{\sigmac(f)+\sigmaa(f)}{2}.
\end{equation}
Since $\sigmaa(f) \leq \frac{1}{2}$ for every $f$ in $\mathscr{H}^2$, we obtain the following counterpart to Corollary~\ref{cor:zeroorder} in the convergence regime from Landau's result. 

\begin{corollary}[Landau] \label{cor:tylandau}
    If $f$ in $\mathscr{H}^2$ has $\sigmac(f)\leq -\frac{1}{2}$, then 
    \[\|f\|_{\mathscr{H}^2}^2 = \lim_{\sigma \to 0^+} \lim_{T\to\infty} \frac{1}{2T} \int_{-T}^T \lvert f(\sigma+it)\rvert^2\,dt.\]
\end{corollary}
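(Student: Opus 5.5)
The statement follows from Landau's inequality \eqref{eq:landau} together with the bound $\sigmaa(f)\le\frac12$ for $f$ in $\mathscr{H}^2$. We then pass to the limit $\sigma\to0^+$ by monotone convergence, exactly as explained in the introduction.

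The first step is to check that $\sigmaa(f)\le\frac12$ for every $f(s)=\sum_{n\ge1}a_n n^{-s}$ in $\mathscr{H}^2$. This is Cauchy--Schwarz: for $\sigma>\frac12$,
\[
\sum_{n=1}^\infty |a_n| n^{-\sigma} \le \Bigl(\sum_{n=1}^\infty |a_n|^2\Bigr)^{1/2}\Bigl(\sum_{n=1}^\infty n^{-2\sigma}\Bigr)^{1/2}<\infty.
\]

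The second step is to combine this with \eqref{eq:landau}. Under the hypothesis $\sigmac(f)\le-\frac12$ we get
\[
\sigmam(f)\le \frac{\sigmac(f)+\sigmaa(f)}{2}\le\frac{-\frac12+\frac12}{2}=0.
\]
By the definition of $\sigmam(f)$, the function $f$ then has an analytic continuation of finite order to $\mathbb{C}_\kappa$ for $\kappa$ arbitrarily close to $0$ from the right, hence to $\mathbb{C}_0$. Moreover, \eqref{eq:mvf} holds for every $\sigma>0$, with both sides finite and equal. Since $\sigmac(f)\le -\frac12<0$, the continuation is in fact given by the convergent Dirichlet series itself, so $f(\sigma+it)$ on the left of \eqref{eq:mvf} is unambiguous.

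The final step is the limit. For each $\sigma>0$ the inner limit equals $\sum_{n\ge1}|a_n|^2n^{-2\sigma}$. As $\sigma\to0^+$ these terms increase to $|a_n|^2$, so by the monotone convergence theorem the sum tends to $\sum_{n\ge1}|a_n|^2=\|f\|_{\mathscr{H}^2}^2$, which is the desired norm formula.

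There is no genuine obstacle once \eqref{eq:landau} is granted. The only point needing care is that Landau's result is applied in the form where $\sigmam$ includes the finite-order continuation requirement. This is automatic here, since $\mu_f(\sigma)\le1$ for $\sigma>\sigmac(f)$, so $f$ has finite order in $\mathbb{C}_{\sigmac(f)+\varepsilon}$ for every $\varepsilon>0$.
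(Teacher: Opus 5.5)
Your proposal is correct and follows the paper's own route: $\sigmaa(f)\le\frac12$ for $f$ in $\mathscr{H}^2$, combined with Landau's inequality \eqref{eq:landau}, gives $\sigmam(f)\le 0$. The norm formula then follows from \eqref{eq:mvf} by monotone convergence as $\sigma\to0^+$.
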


Notice here the discrepancy between the necessary $\sigmac(f) \leq 0$ and sufficient $\sigmac(f) \leq -\frac{1}{2}$ conditions for the validity of the norm formula \eqref{eq:H2norm}. As in the first arc, we wish to determine the largest $\theta$ such that $\sigmac(f) \leq \theta$ for $f$ in $\mathscr{H}^2$ ensures that the norm formula \eqref{eq:H2norm} holds.

The key ingredient in the proof of Landau's result \eqref{eq:landau} (see~\cite{Landau1953}*{Satz~35 in \S224} or~\cite{BK2024}*{Section~3}) is that if $f(s) = \sum_{n\geq1} a_n n^{-s}$ and $\sigma>\sigmac(f)$, then we have access to the coefficients via the formula
\[a_n n^{-\sigma} = \lim_{T\to\infty} \frac{1}{2T} \int_{-T}^T f(\sigma+it) \, n^{it}\,dt.\]
It follows from this and a straightforward argument that
\begin{equation} \label{eq:liminf}
    \sum_{n=1}^\infty \lvert a_n \rvert^2 n^{-2\sigma} \leq \liminf_{T\to\infty} \frac{1}{2T} \int_{-T}^T \lvert f(\sigma+it) \rvert^2 \,dt.
\end{equation}
From \eqref{eq:liminf} it is easy to find an example demonstrating that \eqref{eq:landau} is sharp in general. Indeed, the alternating zeta function
\begin{equation}\label{eq:eta}
\eta(s) = \sum_{n=1}^\infty (-1)^{n-1} n^{-s}=(1-2^{1-s})\zeta(s)
\end{equation}
has $\sigmac(\eta)=0$, $\sigmaa(\eta)=1$, and must have $\sigmam(\eta) \geq \frac{1}{2}$ since the left-hand side of \eqref{eq:liminf} in this case is infinite for $\sigma\leq\frac{1}{2}$.

If $f$ is in $\mathscr{H}^2$ and the necessary condition $\sigmac(f)\leq 0$ for the norm formula \eqref{eq:H2norm} holds, then we get $\sigmam(f) \leq \frac{1}{4}$ from Landau's theorem. However, we cannot infer that $\frac{1}{4}$ is sharp via \eqref{eq:liminf}, since the assumption that $f$ is in $\mathscr{H}^2$ means that the left-hand side is finite for every $\sigma>0$. This may have motivated the following problem posed by McCarthy~\cite{JOHN}*{Problem 1}.

\begin{problem}[McCarthy]
    If $f$ is in $\mathscr{H}^2$ and $\sigmac(f) \leq 0$, is it true that $\sigmam(f) \leq 0$?
\end{problem}

Again we take our inspiration from Bohr~\cite{Bohr1950}*{\S1}, who constructed a Dirichlet series $f$ with $\sigmac(f)=0$, $\sigmaa(f)=1$, and
\[\mu_f(\sigma) = 1-\sigma\]
for $0 < \sigma \leq 1$. In view of the facts that $\mu_f(\sigma) \leq 1$ for $\sigma>\sigmac(f)$ and $\mu_f(\sigma)=0$ for $\sigma>\sigmaa(f)$, and of the convexity of the order function discussed above, this means that $\mu_f(\sigma)$ is as large as it can possibly be on this interval. Bohr's construction also demonstrates the optimality of \eqref{eq:landau} due to the bound $\mu_f(\sigma) \leq \frac{1}{2}$ for $\sigma>\sigmam(f)$ discussed above. 

Bohr's example can be shifted and tweaked to obtain a Dirichlet series in $\mathscr{H}^2$, but this yields $\sigmac(f)=-\frac{1}{2}$ and provides no information about whether Corollary~\ref{cor:tylandau} can be improved or about McCarthy's problem. A more elaborate construction yields the following result.

\begin{theorem} \label{thm:theta}
    Fix $-\frac{1}{2}<\theta<\frac{1}{2}$. There is a Dirichlet series $g_\theta$ in $\mathscr{H}^2$ with
    \[\sigmac(g_\theta)=\theta, \qquad \sigmaa(g_\theta)=\frac{1}{2}, \qquad \text{and} \qquad \mu_{g_\theta}(\sigma) = \frac{1-2\sigma}{1-2\theta}\]
    for $\theta < \sigma \leq \frac{1}{2}$.
\end{theorem}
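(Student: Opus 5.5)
We would build $g_\theta$ as a lacunary sum of blocks, following Bohr's construction in \cite{Bohr1950}*{\S1} but with block lengths and weights tuned to the target line $\mu(\sigma)=\frac{1-2\sigma}{1-2\theta}$. Fix a rapidly increasing sequence $N_k$, say $N_{k+1} \geq N_k^{k}$. The $k$-th block is supported on $(N_k, N_k+L_k]$ with $L_k = N_k^{\alpha}$ for a fixed $0<\alpha<1$ chosen below. It has the form
\[
B_k(s) = c_k \sum_{N_k<n\leq N_k+L_k} n^{-s+i\tau_k}\, e(\phi_k(n)),
\]
where the twist is a real parameter $\tau_k \approx N_k$, or a quadratic phase $\phi_k$. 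The point of the twist is that the block is a short exponential sum. It is small (cancelling) on most of the line and resonant, of size roughly $c_k L_k N_k^{-\sigma}$, at heights $|t|\approx \tau_k$. Set $g_\theta=\sum_k B_k$. Then
\[
\|g_\theta\|_{\mathscr{H}^2}^2 \asymp \sum_k c_k^2 L_k ,
\]
so we need $c_k^2 L_k$ summable. On the other hand, $\sigmaa(g_\theta)=\frac12$ should come from $\sum_n |a_n| n^{-\sigma}\approx \sum_k c_k L_k N_k^{-\sigma}$ diverging for $\sigma<\frac12$. This forces $c_k \approx N_k^{-\alpha/2}$ up to a factor like $k^{-1}$.

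\textbf{Step 1: abscissa of convergence.} Partial sums inside a block are controlled by exponential-sum estimates. For $\tau_k\approx N_k$ and $n$ near $N_k$, the phase $\tau_k\log n$ has derivative about $1$ with small second derivative $\tau_k/n^2\approx 1/N_k$. By van der Corput (the second-derivative test, or simply the Kusmin--Landau bound after choosing $\tau_k$ so that $\tau_k/n$ stays away from $2\pi\mathbb{Z}$), the partial sums are at most $O(N_k^{1/2})$ uniformly. Hence the maximal partial sum of $B_k(\sigma)$ is about $c_k N_k^{1/2-\sigma}$. Convergence at $\sigma$ holds exactly when $\sigma > \frac12-\alpha/2+\cdots$, so we pick $\alpha$ with $\frac12(1-\alpha)=\theta+\frac12-\ldots$. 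More precisely, we tune $\alpha$ and the sizes so that $\sigmac=\theta$. We expect to need $L_k$ and the cancellation scale to be related by $N_k^{1-2\theta}$. This calibration is what makes the slope $\frac{1}{1-2\theta}$ appear.

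\textbf{Step 2: the order function.} For the upper bound $\mu_{g_\theta}(\sigma)\leq \frac{1-2\sigma}{1-2\theta}$, we use convexity. The general bound $\mu\leq 1$ is not available here, but Step 1 gives $\mu_{g_\theta}(\sigma)\le 1$ near $\sigma=\theta$, and we have $\mu_{g_\theta}(\frac12)=0$ from absolute convergence. Since the target is the chord between $(\theta,1)$ and $(\frac12,0)$, convexity of $\mu$ gives the upper bound immediately. For the lower bound, we evaluate at $t=\tau_k$. There block $k$ resonates, giving a size of roughly $c_kL_kN_k^{-\sigma}$ with $\log t\approx \log N_k$. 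The other blocks contribute negligibly by lacunarity: earlier blocks are bounded crudely, and later blocks are small via the partial summation bound of Step 1. Taking logarithms yields the exponent $\frac{1-2\sigma}{1-2\theta}$ once the parameters are matched. If the straight twist does not give a large enough resonance relative to $\tau_k$, we will replace $\tau_k$ by a much larger height $T_k = N_k^{1/(1-2\theta)}$-type scale. We would then realize resonance through a Dirichlet-polynomial approximation argument, using Kronecker's theorem on the $\log p$'s, and place blocks on primes.

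\textbf{Main obstacle.} The hard part is the simultaneous calibration. Convergence exactly at $\theta$ needs cancellation in partial sums, yet the order must reach $1$ at $\sigma=\theta$ at heights $T_k$ with $\log T_k$ comparable to $\frac{1}{1-2\theta}\log N_k$, and all of this while keeping the coefficients square-summable. We would first try the Kronecker approach: take block coefficients supported on primes in $(N_k,2N_k]$, so that on a suitable $t$ of size $T_k$ all $p^{-it}$ align. We would then verify that Bohr's partial-sum bound gives $\sigmac=\theta$ by choosing block sizes $L_k=N_k^{1-2\theta}$-scaled weights. The remaining checks are routine.
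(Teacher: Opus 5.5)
There is a genuine gap. The proposal never reaches a consistent choice of parameters, and your fallback goes in the wrong direction.

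\textbf{Short blocks cannot work.} The statement requires $\sigmaa(g_\theta)=\frac12$. Suppose block $k$ has $L_k=N_k^{\alpha}$ coefficients of modulus $c_k$ with $\sum_k c_k^2L_k<\infty$. Then $c_kL_k\leq (c_k^2L_k)^{1/2}L_k^{1/2}\ll N_k^{\alpha/2}$, so $\sigmaa\leq\frac{\alpha}{2}<\frac12$. The blocks must therefore have length $\asymp N_k$ and coefficients of size about $N_k^{-1/2}$, as in the paper's dyadic blocks with coefficients $\frac1m n^{-\frac12}$.

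\textbf{The resonance height is miscalibrated; this is the missing idea.} A full block near $N$ resonating at height $T$ has size about $N^{\frac12-\sigma}$. This yields $\mu(\sigma)\geq\frac{(\frac12-\sigma)\log N}{\log T}$, and matching $\frac{1-2\sigma}{1-2\theta}$ forces $T=N^{\frac12-\theta}$, which is strictly \emph{smaller} than $N$. Your choice $\tau_k\approx N_k$ only produces slope $-1$, i.e.\ the case $\theta=-\frac12$, which is the shifted Bohr example mentioned in the introduction. Your fallback height $N_k^{1/(1-2\theta)}$ has the wrong exponent: you need $\frac12-\theta$, not $\frac1{1-2\theta}$. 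Kronecker-type heights are not polynomially controlled in $N_k$, so they drive $\log\lvert g_\theta\rvert/\log t$ to $0$. No Kronecker argument is needed anyway, since the twist $n^{i\tau}$ resonates at $t=\tau$ by construction.

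\textbf{How the paper implements this calibration.} It takes $Q_m(s)=\frac1m\sum_{2^m\leq n<2^{m+1}}n^{i\tau_m}n^{-\frac12-s}$ with $\tau_m=2^{(\frac12-\theta)m}$.

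For convergence, the first-derivative estimate of Lemma~\ref{lem:vdc} bounds partial sums by $N^{\frac12-\sigma}/\lvert t\rvert$ for $0<\lvert t\rvert\leq N/2$. This is a saving of exactly $1/\tau_m$ over the trivial bound, so the partial sums of $Q_m$ at real $\sigma$ are $\ll 2^{m(\theta-\sigma)}/m$, giving $\sigmac(g_\theta)\leq\theta$. Your Step~1 instead quotes an $O(N_k^{1/2})$ bound. For $\tau\leq N/2$ the first-derivative test gives $O(N/\tau)$, hence $O(1)$ when $\tau\asymp N$. You also leave the exponent as ``$\frac12-\frac{\alpha}{2}+\cdots$'', so $\sigmac$ is never pinned down.

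For the lower bound, evaluate at $s=\frac{1+2\theta}{4}+i\tau_k$:
\begin{itemize}
\item the $k$th block is $\gg\tau_k^{1/2}/k$;
\item blocks with $m\neq k$ and $\tau_k\leq 2^{m-1}$ are controlled by the same lemma, since $\lvert\tau_m-\tau_k\rvert\gg\max(\tau_m,\tau_k)$;
\item the remaining early blocks are trivially $O(\tau_k^{(1-2\theta)/4})$, which is negligible because $\theta>-\frac12$.
\end{itemize}
This gives $\mu_{g_\theta}(\frac{1+2\theta}{4})\geq\frac12$, and convexity against the chord from $(\theta,1)$ to $(\frac12,0)$ finishes.

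\textbf{What is salvageable.} Your convexity argument for the upper bound is fine. Contrary to your remark, $\mu\leq1$ for $\sigma>\sigmac$ is available in general. Your lacunary spacing would also work, provided the blocks are full length and $\tau_k=N_k^{\frac12-\theta}$. Without that choice, neither Step~1 nor the lower bound in Step~2 can be carried out.
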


As with Bohr's construction, the order function of $g_\theta$ is as large as possible on the whole interval $\theta < \sigma \leq \frac{1}{2}$. In this sense, Theorem~\ref{thm:theta} may be thought of as a refinement of Bohr's construction. 

Landau's theorem \eqref{eq:landau} and the bound $\mu_f(\sigma) \leq \frac{1}{2}$ for $\sigma>\sigmam(f)$ yield that
\[\sigmam(g_\theta) = \frac{1+2\theta}{4}\]
for the Dirichlet series $g_\theta$ from Theorem~\ref{thm:theta}. For $-\frac{1}{2}<\theta\leq0$, this demonstrates the optimality of Corollary~\ref{cor:tylandau} and resolves McCarthy's problem in the negative.

It is worth mentioning that if the answer to McCarthy's problem were positive, then it would have significant consequences for the order of the Riemann zeta function.

Let $\Delta_k(x)$ denote the error term in the generalized divisor problem, so that
\[\sum_{n\leq x} d_k(n) = xP_{k-1}(\log{x}) + \Delta_k(x)\]
for a polynomial $P_{k-1}$ of degree $k-1$, and let $\alpha_k$ denote the infimum of the numbers $\alpha$ for which $\Delta_k(x) = O\left(x^{\alpha+\varepsilon}\right)$ holds for every $\varepsilon>0$. Since the abscissa of convergence of a Dirichlet series is determined by the growth of the summatory function of its coefficients (see e.g.~\cite{Titchmarsh1958}*{\S9.14}), it follows from \eqref{eq:eta} and a calculation that
\[\sigmac(\eta^k) \leq \alpha_k.\]
Set $\sigma_k = \max(\alpha_k,\frac{1}{2})$ and fix $\delta>0$. The function $f(s) = \eta^k(\sigma_k+\delta+s)$ is then in $\mathscr{H}^2$ and satisfies $\sigmac(f)<0$, so a positive answer to McCarthy's problem would give $\sigmam(\eta^k)\leq\sigma_k$ and, consequently, that
\[\mu_\zeta(\sigma_k) = \mu_\eta(\sigma_k) \leq \frac{1}{2k}.\]
Among the known estimates for $\alpha_k$, the most striking bound that could be obtained in this way comes from $k=4$, where $\alpha_4\leq\frac{1}{2}$ by a result of Tong~\cite{Tong1953}. This is the largest $k$ for which the shift still reaches the critical line, and it gives
\[\mu_\zeta\left(\frac12\right)\leq\frac18.\]
Note that the best known bound for the order of the zeta function on the critical line is due to Bourgain~\cite{Bourgain2017}, who proved that $\mu_\zeta\left(\frac{1}{2}\right)\leq\frac{13}{84}$.

It was this connection that attracted us to McCarthy's problem and suggested that we should try to construct the example in Theorem~\ref{thm:theta}.

\subsection*{AI disclosure}
The mathematical ideas and constructions that led to the three main results (Theorem~\ref{thm:betayes}, Theorem~\ref{thm:betano}, and Theorem~\ref{thm:theta}) are due to the authors. The authors used Claude Opus 5/5.5 and GPT-5.6 Sol to distill the arguments, polish the presentation, and search and translate the literature. The final version of the paper was written by the authors, who take full responsibility for its content.

\subsection*{Organization} This paper contains three additional sections. They are devoted, respectively, to the proofs of Theorem~\ref{thm:betayes}, Theorem~\ref{thm:betano}, and Theorem~\ref{thm:theta}. The material can be read in any order, but the proof of Theorem~\ref{thm:betano} uses Theorem~\ref{thm:betayes} via Corollary~\ref{cor:schneelandau}.

\section{Proof of Theorem~\ref{thm:betayes}}
As mentioned in the introduction, the proof relies on two classical results. The first is the Cahen--Mellin formula (see~\cite{Titchmarsh1958}*{\S9.43}).

\begin{lemma} \label{lem:cm}
	Let $f(s)=\sum_{n\geq1}a_n n^{-s}$ be a somewhere convergent Dirichlet series and
	fix $\kappa>\max(0,\sigmaa(f))$. If $\delta>0$, then
	\[f_\delta(s) = \sum_{n=1}^\infty a_n e^{-n\delta}\,n^{-s}
	= \frac{1}{2\pi i}\int_{\kappa-i\infty}^{\kappa+i\infty}
	\Gamma(z)\,f(s+z)\,\delta^{-z}\,dz\]
	for every $s$ in $\mathbb{C}_0$.
\end{lemma}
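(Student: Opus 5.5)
The plan is to expand $\Gamma(z)$ and interchange sum and integral: write $\Gamma(z)\delta^{-z}=\int_0^\infty e^{-x}(x/\delta)^{z}\,x^{-1}\,dx$ in a form suited to inverse Mellin transforms. Concretely, the basic identity I will invoke is the inverse Mellin formula
\[
e^{-y} = \frac{1}{2\pi i}\int_{\kappa-i\infty}^{\kappa+i\infty}\Gamma(z)\,y^{-z}\,dz, \qquad y>0,\ \kappa>0,
\]
which follows from Mellin inversion since $\Gamma(z)=\int_0^\infty e^{-y}y^{z-1}\,dy$ and $\Gamma$ decays exponentially on vertical lines by Stirling's formula, $\lvert\Gamma(\kappa+iu)\rvert \asymp \lvert u\rvert^{\kappa-1/2}e^{-\pi\lvert u\rvert/2}$. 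Alternatively, one shifts the contour to the left across the poles of $\Gamma$ at $z=-k$ with residues $(-1)^k y^k/k!$, which sum to $e^{-y}$, and verifies that the shifted integrals tend to zero.

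Next I will fix $s\in\mathbb{C}_0$ and apply this identity with $y=n\delta$, multiplying by $a_n n^{-s}$:
\[
a_n e^{-n\delta} n^{-s} = \frac{1}{2\pi i}\int_{\kappa-i\infty}^{\kappa+i\infty}\Gamma(z)\,a_n n^{-s-z}\,\delta^{-z}\,dz.
\]
Summing over $n$, the interchange of sum and integral is justified by Fubini–Tonelli. On the line $\mre z=\kappa$ we have $\mre(s+z)>\kappa>\sigmaa(f)$, so
\[
\sum_n \lvert a_n\rvert n^{-\mre s-\kappa} \le \sum_n \lvert a_n\rvert n^{-\kappa}<\infty .
\]
Moreover $\int\lvert\Gamma(\kappa+iu)\rvert\,\delta^{-\kappa}\,du<\infty$. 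Hence the double sum-integral converges absolutely, and the right-hand side becomes $\frac{1}{2\pi i}\int\Gamma(z)f(s+z)\delta^{-z}\,dz$, since $\sum_n a_n n^{-(s+z)}=f(s+z)$ converges absolutely there. The left-hand side is $f_\delta(s)$; this series converges absolutely because of the factor $e^{-n\delta}$ together with $a_n=O(n^{\sigmaa+\varepsilon})$, and it also converges for all $s$.

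The only mildly delicate step is the inverse Mellin identity itself, specifically the justification of inversion or of the contour shift. I would handle it directly by the residue argument. Shift to $\mre z=-N-\tfrac12$ on a rectangle $\lvert\mre z\rvert$-bounded with height $\pm U$. The horizontal sides vanish as $U\to\infty$ by the exponential decay of $\Gamma$. The remaining integral on $\mre z=-N-\tfrac12$ is bounded by $y^{N+1/2}\int\lvert\Gamma(-N-\tfrac12+iu)\rvert\,du$. Using $\Gamma(z)=\Gamma(z+N+1)/\bigl(z(z+1)\cdots(z+N)\bigr)$, this is $O\bigl(y^{N+1/2}/N!\bigr)\to0$ as $N\to\infty$. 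The residues then give the Taylor series of $e^{-y}$.

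The hypothesis $\kappa>0$ is needed to stay to the right of the pole of $\Gamma$ at $0$. The hypothesis $\kappa>\sigmaa(f)$, together with $\mre s>0$, is what guarantees absolute convergence of $f(s+z)$ on the contour.
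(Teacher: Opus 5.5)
Your proof is correct, and it is the standard argument behind this lemma, which the paper does not prove but cites from Titchmarsh (\S9.43): apply the inverse Mellin identity $e^{-y}=\frac{1}{2\pi i}\int_{\kappa-i\infty}^{\kappa+i\infty}\Gamma(z)\,y^{-z}\,dz$ with $y=n\delta$, then integrate termwise, which is justified by the absolute convergence of $\sum_n \lvert a_n\rvert n^{-\kappa}$ and the integrability of $\Gamma$ on $\mre z=\kappa$. Your residue-shift justification of the Mellin identity, using the $O\bigl(y^{N+1/2}/N!\bigr)$ bound on the line $\mre z=-N-\frac{1}{2}$, is also sound.
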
 

The second is the Montgomery--Vaughan inequality (see \cite{MV1974}*{Corollary~2}).

\begin{lemma} \label{lem:mv}
	If $\sum_{n\geq1} n\lvert b_n \rvert^2<\infty$ and $T>0$, then
	\[\left\lvert\frac{1}{2T}\int_{-T}^{T}\left\lvert \sum_{n=1}^\infty b_n n^{-it}\right\rvert^2\,dt
	-\sum_{n=1}^\infty\lvert b_n\rvert^2\right\rvert
	\leq \frac{9 \pi}{4T}\sum_{n=1}^\infty n \lvert b_n\rvert^2.\]
\end{lemma}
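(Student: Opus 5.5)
The plan is to derive this from the Montgomery--Vaughan generalized Hilbert inequality. Let $\lambda_1<\lambda_2<\cdots$ be real numbers, and let $\delta_n=\min_{m\neq n}\lvert\lambda_m-\lambda_n\rvert>0$. That inequality states that for finitely supported $(c_n)$,
\[\left\lvert \sum_{m\neq n} \frac{c_m\overline{c_n}}{\lambda_m-\lambda_n}\right\rvert \leq \frac{3\pi}{2}\sum_n \frac{\lvert c_n\rvert^2}{\delta_n}.\]
I would take this as the black box. I would first prove the lemma for finitely many nonzero $b_n$, and then remove this restriction by a limiting argument.

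\textbf{Step 1: expand the mean square.} Set $\lambda_n=\log n$ and assume $b_n=0$ for $n>N$. Expanding the square and integrating term by term gives
\[\frac{1}{2T}\int_{-T}^T\left\lvert\sum_n b_n n^{-it}\right\rvert^2dt=\sum_n\lvert b_n\rvert^2+\sum_{m\neq n} b_m\overline{b_n}\,\frac{\sin\bigl(T(\lambda_n-\lambda_m)\bigr)}{T(\lambda_n-\lambda_m)}.\]
I would write $\sin x=(e^{ix}-e^{-ix})/(2i)$. The off-diagonal sum then splits into two terms of the form
\[\pm\frac{1}{2iT}\sum_{m\neq n}\frac{c_m\overline{c_n}}{\lambda_n-\lambda_m},\qquad c_n=b_n e^{\mp iT\lambda_n}.\]
Since $\lvert c_n\rvert=\lvert b_n\rvert$, each term is at most $\frac{1}{2T}\cdot\frac{3\pi}{2}\sum_n\lvert b_n\rvert^2/\delta_n$ in modulus. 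Together the off-diagonal contribution is bounded by $\frac{3\pi}{2T}\sum_n\lvert b_n\rvert^2/\delta_n$.

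\textbf{Step 2: bound the spacing.} For $\lambda_n=\log n$ we have $\delta_n=\log(1+1/n)$, since consecutive gaps decrease in $n$. The elementary inequality $\log(1+x)\geq 2x/(2+x)$ for $x>0$ then gives $1/\delta_n\leq n+\tfrac12\leq\tfrac32 n$. Hence the off-diagonal term is at most
\[\frac{3\pi}{2T}\cdot\frac32\sum_n n\lvert b_n\rvert^2=\frac{9\pi}{4T}\sum_n n\lvert b_n\rvert^2,\]
which is exactly the claimed constant.

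\textbf{Step 3: pass to infinite sums.} I expect this to be the only delicate point, and it is a mild one. The hypothesis $\sum n\lvert b_n\rvert^2<\infty$ does not give absolute convergence of $\sum b_n n^{-it}$. Instead I would apply the finite inequality to blocks $b_n\mathbf 1_{M<n\leq N}$. This shows that the partial sums are Cauchy in $L^2([-T,T])$, because the mean square of a block is at most $(1+\tfrac{9\pi}{4T})\sum_{M<n\leq N} n\lvert b_n\rvert^2$. So $\sum b_n n^{-it}$ is interpreted as this $L^2$ limit, which agrees with any pointwise limit almost everywhere. Letting $N\to\infty$ in the finite inequality then yields the lemma, since both sides converge.
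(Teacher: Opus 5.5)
Your argument is correct and is essentially the paper's approach. The paper gives no proof and simply cites Corollary~2 of Montgomery--Vaughan. Your Steps~1--3 are the standard deduction of that corollary from their generalized Hilbert inequality, and the bound $1/\delta_n \leq n+\frac12 \leq \frac32 n$ reproduces the constant $\frac{3\pi}{2}\cdot\frac{3}{2}=\frac{9\pi}{4}$ exactly. One small point: for a finite truncation the gaps are only larger, so $\delta_n \geq \log(1+\frac1n)$, rather than equality, is all you need.
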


We are now ready to proceed with the proof of Theorem~\ref{thm:betayes}. 

\begin{proof}[Proof of Theorem~\ref{thm:betayes}]
    For $T>0$, we take $\delta=T^{-\alpha}$, where $\alpha>0$ is a parameter that will be chosen later. The goal is to show that
    \begin{equation} \label{eq:limT}
        \lim_{T\to\infty} \left\lvert \left(\frac{1}{2T}\int_{-T}^T \lvert f(\sigma+it)\rvert^2 \,dt\right)^{\frac{1}{2}} -\left(\sum_{n=1}^\infty \lvert a_n \rvert^2 e^{-2n\delta} n^{-2\sigma}\right)^{\frac{1}{2}}\right\rvert=0
    \end{equation}
    for $\frac{\beta}{1+2\beta}<\sigma \leq \frac{1}{2}$. The idea is to approximate the Dirichlet series $f$ using the absolutely convergent series $f_\delta$ from Lemma~\ref{lem:cm} with $\kappa=1$. To this end, we apply the triangle inequality in \eqref{eq:limT} after adding and subtracting the quantity
    \[\left(\frac{1}{2T}\int_{-T}^T \lvert f_\delta(\sigma+it)\rvert^2 \,dt \right)^{\frac{1}{2}}.\]
    To show that the two terms obtained from the triangle inequality go to $0$ as $T\to\infty$, we will use the order assumption via Lemma~\ref{lem:cm} and the $\mathscr{H}^2$ assumption via Lemma~\ref{lem:mv}, respectively. These two estimates place opposing requirements on the parameter $\alpha$. It is possible to choose an $\alpha$ that satisfies both requirements precisely when $\sigma > \frac{\beta}{1+2\beta}$.
    
    To handle the first term, we require two estimates.
    The first follows from the assumption that $\mu_f(\sigma) \leq \beta$ for every $\sigma>0$ and the Phragm\'{e}n--Lindel\"{o}f principle. For every $\varepsilon>0$, there is a constant $B_\varepsilon>0$ such that if $s=\sigma+it$ and $\sigma\geq\varepsilon$, then 
    \begin{equation} \label{eq:PL}
        \lvert f(s) \rvert \leq B_\varepsilon (1+\lvert t \rvert)^{\beta}.
    \end{equation}
    The second is a coarse version of Stirling's formula (see e.g.~\cite{Titchmarsh1958}*{\S4.42}). There is an absolute constant $A>0$ such that if $z=x+iy$ and $-\frac{1}{2} \leq x \leq 1$, then 
    \begin{equation} \label{eq:stirling}
        \lvert \Gamma(z) \rvert \leq A \frac{e^{-\lvert y \rvert}}{\lvert z \rvert}.
    \end{equation}
    For $0<\varepsilon < \sigma \leq \frac{1}{2}$, we use \eqref{eq:PL} and \eqref{eq:stirling} on the horizontal segments to shift the contour in the definition of $f_\delta$ from $\mre{z}=\kappa$ to $\mre{z}=\varepsilon-\sigma$, thereby picking up the simple pole of $\Gamma$ at the origin. This yields
    \[f_\delta(s) - f(s) = \frac{1}{2\pi i} \int_{\varepsilon-\sigma-i\infty}^{\varepsilon-\sigma+i\infty}\Gamma(z)\,f(s+z)\,\delta^{-z}\,dz.\]
    Using \eqref{eq:PL} and \eqref{eq:stirling} again, we infer from this that
    \[\lvert f_\delta(s) - f(s)\rvert \leq A B_\varepsilon C_\beta \frac{(1+\lvert t \rvert)^\beta \delta^{\sigma-\varepsilon}}{\sigma-\varepsilon} \leq A B_\varepsilon C_\beta 2^\beta \frac{T^{\beta-\alpha(\sigma-\varepsilon)}}{\sigma-\varepsilon},\]
    where $C_\beta = \frac{1}{2\pi} \int_{-\infty}^\infty (1+\lvert y \rvert)^\beta e^{-\lvert y \rvert}\,dy$, and the second inequality holds for $\lvert t \rvert \leq T$ when $T\geq1$. Hence if
    \begin{equation} \label{eq:alpha1}
        \alpha > \frac{\beta}{\sigma-\varepsilon},
    \end{equation}
    then    
    \begin{equation} \label{eq:term1}
        \lim_{T\to\infty} \left\lvert \left(\frac{1}{2T}\int_{-T}^T \lvert f(\sigma+it)\rvert^2 \,dt\right)^{\frac{1}{2}} -\left(\frac{1}{2T}\int_{-T}^T \lvert f_\delta(\sigma+it)\rvert^2 \,dt\right)^{\frac{1}{2}}\right\rvert=0.
    \end{equation}

    For the second term, we use that $\lvert a-b \rvert^2 \leq \lvert a^2-b^2 \rvert$ for $a,b\geq0$ and Lemma~\ref{lem:mv} to deduce
    \begin{multline*}
        \left\lvert \left(\frac{1}{2T}\int_{-T}^T \lvert f_\delta(\sigma+it)\rvert^2 \,dt\right)^{\frac{1}{2}} -\left(\sum_{n=1}^\infty \lvert a_n \rvert^2 e^{-2n\delta} n^{-2\sigma}\right)^{\frac{1}{2}}\right\rvert \\ \leq \left(\frac{9}{T} \sum_{n=1}^\infty n^{1-2\sigma} \lvert a_n \rvert^2 e^{-2n\delta}\right)^{\frac{1}{2}}.
    \end{multline*}
    If $0 \leq \sigma \leq \frac{1}{2}$ and $\delta>0$, then $T^{-1} n^{1-2\sigma} e^{-2n\delta} \leq T^{-1}\delta^{-(1-2\sigma)} = T^{\alpha(1-2\sigma)-1}$. Under the proviso that 
    \begin{equation} \label{eq:alpha2}
        (1-2\sigma)\alpha < 1,
    \end{equation}
    it follows from this and the assumption that $f$ is in $\mathscr{H}^2$ that
    \begin{equation} \label{eq:term2}
        \lim_{T\to\infty} \left\lvert \left(\frac{1}{2T}\int_{-T}^T \lvert f_\delta(\sigma+it)\rvert^2 \,dt\right)^{\frac{1}{2}} -\left(\sum_{n=1}^\infty \lvert a_n \rvert^2 e^{-2n\delta} n^{-2\sigma}\right)^{\frac{1}{2}}\right\rvert =0.
    \end{equation}
    We get the desired conclusion \eqref{eq:limT} from \eqref{eq:term1} and \eqref{eq:term2}, provided we can find $\alpha>0$ that satisfies both \eqref{eq:alpha1} and \eqref{eq:alpha2}. By simple algebra, this is possible if and only if we can find $0<\varepsilon<\frac{1}{2}$ such that
    \[\frac{\beta}{1+2\beta} + \frac{\varepsilon}{1+2\beta} < \sigma. \qedhere\]
\end{proof}

\section{Proof of Theorem~\ref{thm:betano}}
We begin by recalling the setup used by Bohr~\cite{Bohr1949}. The finite difference operator of order $r\geq0$ and span $d\geq1$ acts on the Dirichlet monomials $\{n^{-s}\}_{n\geq1}$ by
\begin{equation} \label{eq:Ddr}
    \Delta_d^r(n^{-s}) = \sum_{j=0}^r (-1)^j \binom{r}{j} (n+jd)^{-s}.
\end{equation}
The formula
\begin{multline*}
    \Delta_d^r(n^{-s}) = s(s+1)\cdots(s+r-1) \\ \times \int_n^{n+d}\int_{x_1}^{x_1+d} \cdots\int_{x_{r-1}}^{x_{r-1}+d} x_r^{-s-r}\,dx_r\,dx_{r-1} \cdots\,dx_1,
\end{multline*}
roughly speaking, makes it possible to trade $n^{-\sigma}$ for $n^{-\sigma-r}$ at the cost of $(\lvert s \rvert d)^r$ when $\sigma+r>0$. We will use the following more flexible (and accurate) version of this estimate, which is due to Bohr~\cite{Bohr1949}*{p.~15}.

\begin{lemma} \label{lem:rsplit}
    Let $r = r_1+r_2$ for $r_1,r_2\geq0$ and $d\geq1$. If $\sigma+r_2 >0$, then
    \[\lvert \Delta_d^r(n^{-s}) \rvert \leq 2^{r_1} \left(\prod_{k=0}^{r_2-1}\lvert s+k\rvert\right) d^{r_2} n^{-\sigma-r_2}.\]
\end{lemma}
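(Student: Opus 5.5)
We need to prove Lemma~\ref{lem:rsplit}: for $r=r_1+r_2$ and $\sigma+r_2>0$,
\[\lvert \Delta_d^r(n^{-s})\rvert \leq 2^{r_1}\Bigl(\prod_{k=0}^{r_2-1}\lvert s+k\rvert\Bigr) d^{r_2} n^{-\sigma-r_2}.\]
The plan is to factor the operator as $\Delta_d^r=\Delta_d^{r_1}\Delta_d^{r_2}$. The operators in \eqref{eq:Ddr} are powers of the single shift-difference operator $\Delta_d\colon n^{-s}\mapsto n^{-s}-(n+d)^{-s}$, extended linearly, so they commute and compose additively in $r$. Thus
\[\Delta_d^r(n^{-s})=\sum_{j=0}^{r_1}(-1)^j\binom{r_1}{j}\,\Delta_d^{r_2}\bigl((n+jd)^{-s}\bigr).\]

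Next we bound each $\Delta_d^{r_2}(m^{-s})$, with $m=n+jd\geq n$, using the iterated integral representation quoted above with $r$ replaced by $r_2$. The integrand satisfies $\lvert x_{r_2}^{-s-r_2}\rvert=x_{r_2}^{-\sigma-r_2}$. Since every variable satisfies $x_{r_2}\geq m\geq n$ and $\sigma+r_2>0$, this is at most $n^{-\sigma-r_2}$. Each of the $r_2$ nested integrals runs over an interval of length $d$, so the region has volume $d^{r_2}$. This gives
\[\lvert\Delta_d^{r_2}(m^{-s})\rvert\leq \Bigl(\prod_{k=0}^{r_2-1}\lvert s+k\rvert\Bigr)d^{r_2}n^{-\sigma-r_2}.\]
The case $r_2=0$ is trivial provided $\sigma>0$, since then $\lvert m^{-s}\rvert\leq n^{-\sigma}$.

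Finally we sum over $j$ using $\sum_{j=0}^{r_1}\binom{r_1}{j}=2^{r_1}$, which yields the stated bound.

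The only step needing care is justifying the iterated integral formula for general $r_2$. It follows by induction on $r_2$. The base case is $m^{-s}-(m+d)^{-s}=s\int_m^{m+d}x^{-s-1}\,dx$. For the inductive step, apply $\Delta_d$ to the formula for $r_2-1$ and use the same identity on the innermost power $x^{-s-r_2+1}$, which produces the extra factor $(s+r_2-1)$ and one more integral of length $d$. Linearity lets us pass $\Delta_d$ through the integrals, because shifting $m$ by $d$ shifts all the integration variables by $d$.
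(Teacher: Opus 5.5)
Your proof is correct and follows essentially the same route as the paper: factor $\Delta_d^r=\Delta_d^{r_1}\Delta_d^{r_2}$, bound each $\Delta_d^{r_2}((n+jd)^{-s})$ by moving absolute values inside the iterated integral (using $x_{r_2}\geq n$ and $\sigma+r_2>0$), and sum the binomial coefficients to obtain $2^{r_1}$. Your inductive derivation of the iterated integral representation is a correct addition, since the paper simply quotes that formula.
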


\begin{proof}
    If $r=0$ there is nothing to do, since $\Delta_d^0(n^{-s}) = n^{-s}$.

    Suppose first that $r_1=0$ so that $r_2=r$. The idea is simply to move absolute values into the integral representation for $\Delta_d^r$ given above. The domain of integration has measure $d^r$ and since
    \[n \leq x_1 \leq x_2 \leq \cdots \leq x_r,\]
    the integrand satisfies $\lvert x_r^{-s-r} \rvert \leq n^{-\sigma-r}$ by the assumption $\sigma+r>0$.

    If $r_1>0$, then the identity $\Delta_d^r = \Delta_d^{r_1} \Delta_d^{r_2}$ and the triangle inequality yield
    \[\lvert \Delta_d^r(n^{-s}) \rvert \leq \sum_{j=0}^{r_1} \binom{r_1}{j} \lvert \Delta_d^{r_2}((n+jd)^{-s})\rvert.\]
    The stated estimate now follows from the case $r_1=0$ established above, since $(n+jd)^{-\sigma-r_2} \leq n^{-\sigma-r_2}$ by the assumption $\sigma+r_2>0$.
\end{proof}

We now introduce the bricks of Bohr's construction.\footnote{Our parameters relate to those in \cite{Bohr1949}*{\S2} by $N_m=p_m$, $d_m=q_m$, and $\frac{2\beta}{1+2\beta}=\alpha$.} Fix $\beta>0$. For $m=1,2,3,\ldots$ we set
\[N_m = 2^{2^m}, \qquad d_m = \left\lceil N_m^{\frac{2\beta}{1+2\beta}}\right\rceil, \qquad P_m(s) = \sum_{n=N_m}^{N_m+d_m-1} \Delta_{d_m}^m(n^{-s}).\]

The choice of $N_m$ ensures that the bricks $P_m$ are supported on disjoint sets of Dirichlet monomials. The key property of $d_m$ is the exponent $\frac{2\beta}{1+2\beta}$ relating it to $N_m$: this single exponent fixes both the abscissae of the Dirichlet series we construct and the slope of its order function. The heart of the construction is the following pointwise upper bound for the bricks.

\begin{lemma} \label{lem:pbrick}
    If $m \geq r \geq 0$ and $\sigma+r>0$, then
    \[\lvert P_m(s) \rvert \leq 2^{m+1}
    \left(\prod_{k=0}^{r-1} \lvert s+k \rvert\right)
    N_m^{\frac{2\beta-r}{1+2\beta}-\sigma}.\]
\end{lemma}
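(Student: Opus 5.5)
Apply Lemma~\ref{lem:rsplit} termwise to the definition of $P_m$ and then compare $d_m$ with $N_m$. Write $r_1 = m-r$ and $r_2 = r$. This is allowed since $m\geq r\geq 0$, and the hypothesis $\sigma+r>0$ is exactly the condition $\sigma+r_2>0$ required there. Then for each $n$ with $N_m\leq n\leq N_m+d_m-1$ we get
\[\lvert \Delta_{d_m}^m(n^{-s})\rvert \leq 2^{m-r}\left(\prod_{k=0}^{r-1}\lvert s+k\rvert\right) d_m^{r} n^{-\sigma-r} \leq 2^{m-r}\left(\prod_{k=0}^{r-1}\lvert s+k\rvert\right) d_m^{r} N_m^{-\sigma-r}.\]
The last step uses $n\geq N_m$ and $\sigma+r>0$. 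Summing over the $d_m$ values of $n$ in the brick and using the triangle inequality gives
\[\lvert P_m(s)\rvert \leq 2^{m-r}\left(\prod_{k=0}^{r-1}\lvert s+k\rvert\right) d_m^{r+1} N_m^{-\sigma-r}.\]

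Next we bound $d_m$ in terms of $N_m$. Set $\gamma = \frac{2\beta}{1+2\beta}\in(0,1)$. Since $N_m\geq 4$, we have $N_m^\gamma\geq1$, so $d_m = \lceil N_m^\gamma\rceil \leq N_m^\gamma+1\leq 2N_m^\gamma$. Hence $d_m^{r+1}\leq 2^{r+1}N_m^{\gamma(r+1)}$. Substituting,
\[\lvert P_m(s)\rvert \leq 2^{m+1}\left(\prod_{k=0}^{r-1}\lvert s+k\rvert\right) N_m^{\gamma(r+1)-r-\sigma}.\]

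It remains to check the exponent. We have
\[\gamma(r+1)-r = \frac{2\beta(r+1) - r(1+2\beta)}{1+2\beta} = \frac{2\beta - r}{1+2\beta},\]
which is exactly the exponent in the statement. The powers of $2$ combine as $2^{m-r}\cdot2^{r+1}=2^{m+1}$, as claimed.

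There is no real obstacle here. The only points needing care are the bound $d_m\leq 2N_m^\gamma$ and the use of $\sigma+r>0$ to replace $n^{-\sigma-r}$ by $N_m^{-\sigma-r}$, together with the edge case $r=0$, where the empty product equals $1$ and Lemma~\ref{lem:rsplit} still applies with $r_2=0$.
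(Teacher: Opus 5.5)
Your proposal is correct and follows the paper's proof exactly: you apply Lemma~\ref{lem:rsplit} with $r_1=m-r$, $r_2=r$ termwise, use $n^{-\sigma-r}\leq N_m^{-\sigma-r}$ and the triangle inequality over the $d_m$ terms, and finish with $d_m\leq 2N_m^{\frac{2\beta}{1+2\beta}}$. Your exponent and power-of-$2$ bookkeeping, including the $r=0$ case, also checks out.
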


\begin{proof}
    We use the triangle inequality and then Lemma~\ref{lem:rsplit} with $r_1=m-r$ and $r_2=r$ to infer that
    \[\lvert P_m(s) \rvert \leq d_m 2^{m-r} \left(\prod_{k=0}^{r-1} \lvert s+k \rvert\right) d_m^r N_m^{-\sigma-r},\]
    since $n^{-\sigma-r} \leq N_m^{-\sigma-r}$ for $N_m \leq n < N_m+d_m$ by the assumption that $\sigma+r>0$. The proof is completed by the bound $d_m \leq 2 N_m^{\frac{2\beta}{1+2\beta}}$.
\end{proof}

\begin{lemma} \label{lem:h2brick}
    Fix $m\geq1$. The brick $P_m$ is a Dirichlet polynomial supported on $N_m \leq n < N_{m+1}$ with
    \[\|P_m\|_{\mathscr{H}^2}^2 = d_m \binom{2m}{m}.\]
\end{lemma}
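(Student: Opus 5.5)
Expanding the definition \eqref{eq:Ddr}, we will write
\[P_m(s) = \sum_{n=N_m}^{N_m+d_m-1} \sum_{j=0}^m (-1)^j\binom{m}{j} (n+jd_m)^{-s}.\]
Every index $k=n+jd_m$ with $N_m\le n<N_m+d_m$ and $0\le j\le m$ has a unique representation of this form: $j$ is determined by $\lfloor (k-N_m)/d_m\rfloor$ and then $n=k-jd_m$. Hence the coefficient of $k^{-s}$ is exactly $(-1)^j\binom{m}{j}$ for $N_m\le k<N_m+(m+1)d_m$, and $0$ otherwise. In particular, no cancellation or overlap occurs between different pairs $(n,j)$.

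From this description the $\mathscr{H}^2$ norm is immediate. Each $j$ contributes $d_m$ indices with coefficient of modulus $\binom{m}{j}$, so
\[\|P_m\|_{\mathscr{H}^2}^2=d_m\sum_{j=0}^m\binom{m}{j}^2=d_m\binom{2m}{m}\]
by the Vandermonde identity.

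The remaining point is the support claim, namely that $N_m+(m+1)d_m-1<N_{m+1}=N_m^2$. Here we will use $d_m\le 2N_m^{\frac{2\beta}{1+2\beta}}\le 2N_m^{1-\varepsilon}$ for some exponent strictly less than $1$, together with $m+1\ll 2^{2^m}$. This reduces the claim to showing
\[N_m+2(m+1)N_m^{\frac{2\beta}{1+2\beta}}\le N_m^2.\]
Since $N_m=2^{2^m}\ge 4$, it suffices to have $1+2(m+1)\le N_m$. That holds because $2m+3\le 2^{2^m}$ for all $m\ge1$ (for $m=1$ it reads $5\le 4$, which fails). So the case $m=1$ has to be handled separately, and this is the only delicate step.

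For $m=1$ we use the sharper bound. With $\gamma=\frac{2\beta}{1+2\beta}<1$ we have $d_1=\lceil 4^{\gamma}\rceil\le 4$, so the largest index is $N_1+2d_1-1$. If $d_1\le 3$ this is at most $4+5=9<16$. If $d_1=4$ it equals $4+7=11<16$. In either case the support lies in $[N_1,N_2)=[4,16)$. The general $m$ case also works directly from the bound above, since $N_m+(m+1)(N_m+1)\le N_m^2$ once $N_m\ge m+3$, which always holds. This completes the proof plan.
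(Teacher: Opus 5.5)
Your proof is correct and matches the paper's argument: the unique representation $k=n+jd_m$ is the paper's disjointness of the supports $\{n,n+d_m,\dots,n+md_m\}$ for $N_m\le n<N_m+d_m$, which gives orthogonality, and Vandermonde's identity finishes the computation. You also check the upper end of the support, which the paper leaves implicit; your case analysis is valid, though the single bound $d_m\le N_m$ already gives $N_m+(m+1)d_m\le (m+2)N_m\le N_m^2$ for every $m\ge1$, with no separate treatment of $m=1$.
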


\begin{proof}
    We first note that the $d_m$ finite differences $\Delta_{d_m}^m(n^{-s})$ appearing in the definition of $P_m$ are Dirichlet polynomials supported on the integers
    \[n, n+d_m, n+2d_m,\ldots,n+md_m.\]
    Since $n$ runs from $N_m$ to $N_m+d_m-1$, it follows that the $d_m$ finite differences are supported on pairwise disjoint sets of integers. They are therefore orthogonal, whence
    \[\|P_m\|_{\mathscr{H}^2}^2 = d_m \sum_{j=0}^m \binom{m}{j}^2 = d_m \binom{2m}{m}\]
    by \eqref{eq:Ddr} and Vandermonde's identity.
\end{proof}

For our purposes, it will be enough to know that $\|P_m\|_{\mathscr{H}^2}^2\leq d_m 4^m$. We will normalize our bricks by dividing by $m 2^m \sqrt{d_m}$, where the extra factor $m$ ensures convergence but plays no role in the rest of the argument.

\begin{proof}[Proof of Theorem~\ref{thm:betano}]
    With $\beta>0$ fixed as above, we put
    \begin{equation} \label{eq:fbeta}
        f_\beta(s) = \sum_{m=1}^\infty \frac{P_m(s)}{m 2^m \sqrt{d_m}}.
    \end{equation}
    We have already seen (via Lemma~\ref{lem:h2brick}) that $f_\beta$ is in $\mathscr{H}^2$.

    We continue with the assertions about the abscissae, where it will be convenient to use \eqref{eq:Ddr} and write out the Dirichlet series expansion
    \[f_\beta(s) = \sum_{n=1}^\infty a_n n^{-s} = \sum_{m=1}^\infty \frac{1}{m 2^m \sqrt{d_m}} \sum_{n=N_m}^{N_m+d_m-1} \sum_{j=0}^m (-1)^j \binom{m}{j} (n+jd_m)^{-s}.\]
    For this part of the argument, we will assume that $\sigma\geq0$. As discussed above, each integer of the form $n+jd_m$ occurs only once in this expansion. Estimating by $n+jd_m \geq N_m$ and $d_m \leq 2N_m^{\frac{2\beta}{1+2\beta}}$, we obtain
    \[\sum_{n=1}^\infty \lvert a_n \rvert n^{-\sigma} \leq \sum_{m=1}^\infty \frac{\sqrt{d_m}}{m}N_m^{-\sigma} \leq \sum_{m=1}^\infty \frac{\sqrt{2}}{m} N_m^{\frac{\beta}{1+2\beta}-\sigma}.\]
    This demonstrates that $\sigmaa(f_\beta) \leq \frac{\beta}{1+2\beta}$. The integers $N_m \leq n < N_m+d_m$ are precisely those of the $m$th brick with $j=0$, so their coefficients are all equal and the partial sums satisfy
    \begin{multline*}
        S_{N_m+d_m-1}f_\beta(\sigma)-S_{N_m-1}f_\beta(\sigma)
        \\ = \frac{1}{m 2^m \sqrt{d_m}}\sum_{n=N_m}^{N_m+d_m-1} n^{-\sigma} \geq \frac{2^{-\sigma}}{m 2^m} N_m^{\frac{\beta}{1+2\beta}-\sigma},
    \end{multline*}
    since $N_m^{\frac{2\beta}{1+2\beta}} \leq d_m \leq N_m$. This demonstrates that $\sigmac(f_\beta) \geq \frac{\beta}{1+2\beta}$ and hence that
    \[\sigmac(f_\beta) = \sigmaa(f_\beta) = \frac{\beta}{1+2\beta}.\]

    Let us next establish that $f_\beta$ is entire. Fix $r\geq1$ and decompose \eqref{eq:fbeta} as
    \[f_\beta(s) = \sum_{m=1}^{r-1} \frac{1}{m 2^m \sqrt{d_m}} P_m(s) + \sum_{m=r}^\infty \frac{1}{m 2^m \sqrt{d_m}} P_m(s).\]
    The first sum is a Dirichlet polynomial and hence entire. To estimate the second sum, we assume that $\sigma+r>0$ and use Lemma~\ref{lem:pbrick} and $d_m \geq N_m^{\frac{2\beta}{1+2\beta}}$ to obtain
    \begin{equation} \label{eq:tailest}
        \sum_{m=r}^\infty \frac{1}{m 2^m \sqrt{d_m}} \lvert P_m(s) \rvert \leq \left(\prod_{k=0}^{r-1} \lvert s+k \rvert\right)\sum_{m=r}^\infty \frac{2}{m} N_m^{\frac{\beta-r}{1+2\beta}-\sigma}.
    \end{equation}
    The series on the right-hand side of \eqref{eq:tailest} converges for $\sigma>\frac{\beta-r}{1+2\beta}$. Note that this is a stronger requirement than our assumption $\sigma+r>0$. This demonstrates that \eqref{eq:fbeta} converges locally uniformly in the half-plane $\mathbb{C}_{\frac{\beta-r}{1+2\beta}}$ for every $r=1,2,3,\ldots$, and so $f_\beta$ is entire.

    All that remains is to compute the order function. The bound \eqref{eq:tailest} also yields that $\mu_{f_\beta}(\sigma) \leq r$ for $\sigma > \frac{\beta-r}{1+2\beta}$ and thus, by continuity,
    \begin{equation} \label{eq:mur}
        \mu_{f_\beta}\left(\frac{\beta-r}{1+2\beta}\right) \leq r
    \end{equation}
    for $r=1,2,3,\ldots$. We also have
    \begin{equation} \label{eq:twomore}
        \mu_{f_\beta}\left(\frac{\beta}{1+2\beta}\right)=0 \qquad \text{and} \qquad \mu_{f_\beta}(0)\geq \beta,
    \end{equation}
    the former due to the general fact $\mu_{f_\beta}(\sigmaa(f_\beta))=0$ and the latter from Corollary~\ref{cor:schneelandau} via contradiction: if $\mu_{f_\beta}(0)<\beta$, then $\sigmac(f_\beta) < \frac{\beta}{1+2\beta}$ since $f_\beta$ is in $\mathscr{H}^2$, which contradicts $\sigmac(f_\beta) = \frac{\beta}{1+2\beta}$ established above. The only convex function on $(-\infty,\frac{\beta}{1+2\beta}]$ that satisfies both \eqref{eq:mur} and \eqref{eq:twomore} is
    \[\mu_{f_\beta}(\sigma) = \beta-(1+2\beta)\sigma,\]
    since $0$ is an interior point of the interval $[\frac{\beta-r}{1+2\beta},\frac{\beta}{1+2\beta}]$ when $r>\beta$.
\end{proof}

\section{Proof of Theorem~\ref{thm:theta}}
The idea behind the construction is fairly simple. The Dirichlet series
\[g(s) = \sum_{n=2}^\infty \frac{1}{\sqrt{n}\log{n}} n^{-s}\]
is in $\mathscr{H}^2$ with $\sigmac(g)=\sigmaa(g)=\frac{1}{2}$, which should be thought of as the endpoint $\theta=\frac{1}{2}$ in Theorem~\ref{thm:theta}. We shall take the dyadic decomposition of $g$ and translate its $m$th component vertically, which transfers its mass from $t=0$ to height $t=\tau_m$. After putting the Dirichlet series back together, we need to estimate the contribution of the different components. This will be handled by the following estimate, which follows from a basic result about exponential sums (see e.g.~\cite{Titchmarsh1986}*{Lemma~4.8}) and Abel summation.

\begin{lemma} \label{lem:vdc}
    There is an absolute constant $C$ such that if $2 \leq N \leq M <2N$ and if $\sigma\geq-\frac{1}{2}$ and $0<\lvert t\rvert\leq \frac{N}{2}$, then
    \[\left\lvert \sum_{n=N}^M n^{-\frac{1}{2}-s} \right \rvert \leq C\frac{N^{\frac{1}{2}-\sigma}}{\lvert t \rvert}.\]
\end{lemma}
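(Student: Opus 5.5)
We plan to derive Lemma~\ref{lem:vdc} from the Kusmin--Landau type first-derivative estimate (\cite{Titchmarsh1986}*{Lemma~4.8}), followed by partial summation. We write $s=\sigma+it$ and factor
\[n^{-\frac{1}{2}-s} = n^{-\frac{1}{2}-\sigma}\, e^{-it\log n},\]
so the sum is an exponential sum with phase $F(x) = -\frac{t}{2\pi}\log x$ weighted by the monotone factor $n^{-\frac12-\sigma}$. Since $\sigma\ge -\frac12$, the exponent $-\frac12-\sigma\le 0$. Hence the weight is nonincreasing in $n$, and on $[N,M]$ it is at most $N^{-\frac12-\sigma}$ up to a harmless constant, because $M<2N$.

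\textbf{Step 1 (unweighted sums).} For $N\le K\le M$ we bound $E(K)=\sum_{n=N}^{K} e^{-it\log n}$. We have $F'(x) = -\frac{t}{2\pi x}$, which is monotone on $[N,M]$. Its absolute value lies in $\bigl[\frac{|t|}{4\pi N},\frac{|t|}{2\pi N}\bigr]$, since $N\le x<2N$. The hypothesis $|t|\le \frac N2$ gives $|F'(x)|\le \frac{1}{4\pi}<1-\lambda$ with, say, $\lambda=\frac{|t|}{4\pi N}$. The Kusmin--Landau lemma then yields
\[|E(K)|\ll \lambda^{-1}\ll \frac{N}{|t|},\]
uniformly in $K$. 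This is the only place the restriction $|t|\le N/2$ enters. It guarantees that $\|F'\|$, the distance from $F'$ to the nearest integer, equals $|F'|$, so no stationary points occur.

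\textbf{Step 2 (Abel summation).} Write $w(n)=n^{-\frac12-\sigma}$. Summation by parts gives
\[\sum_{n=N}^M w(n)e^{-it\log n} = w(M)E(M) + \sum_{K=N}^{M-1} \bigl(w(K)-w(K+1)\bigr)E(K).\]
Because $w$ is monotone, the differences telescope in absolute value. The total is therefore at most
\[\bigl(w(M)+w(N)-w(M)\bigr)\sup_K|E(K)| = w(N)\sup_K|E(K)|.\]
If $\sigma=-\frac12$ exactly, the weight is constant and the same bound holds trivially. Combining with Step~1 gives
\[\ll N^{-\frac12-\sigma}\cdot \frac{N}{|t|} = \frac{N^{\frac12-\sigma}}{|t|}.\]
The implied constant depends only on the absolute constant in the Kusmin--Landau lemma.

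\textbf{Main obstacle.} The only delicate point is Step~1: we must check that the hypotheses of Titchmarsh's Lemma~4.8 are met, namely that $F'$ is monotone and that $\lambda\le |F'|\le 1-\lambda$ holds on the whole interval. We must also confirm that the resulting constant is independent of $\sigma$, $N$, $M$ and $t$. The condition $|t|\le N/2$ is exactly what makes $|F'|\le \frac1{4\pi}$. The lower bound $|F'|\ge\frac{|t|}{4\pi N}$ uses $M<2N$.
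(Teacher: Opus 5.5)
Your proof is correct and is essentially the paper's argument: a first-derivative exponential sum estimate gives $\bigl\lvert\sum_{n=N}^{K} n^{-it}\bigr\rvert \ll N/\lvert t\rvert$ uniformly in $K$, where $\lvert t\rvert\le N/2$ keeps $\lvert F'\rvert\le\frac{1}{4\pi}$ away from the nonzero integers, and this is followed by Abel summation against the nonincreasing weight $n^{-\frac12-\sigma}$. The one cosmetic difference is that the cited Lemma~4.8 of Titchmarsh is the statement that the sum equals $\int_N^K x^{-it}\,dx+O(1)$ when $\lvert F'\rvert\le\theta<1$, not the Kusmin--Landau bound $O(\lambda^{-1})$; with that citation you would also use $\bigl\lvert\int_N^K x^{-it}\,dx\bigr\rvert\le 3N/\lvert t\rvert$ and $1\le N/(2\lvert t\rvert)$, which changes nothing.
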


We now proceed with the proof of Theorem~\ref{thm:theta}. In order to simplify the expressions, we make the cosmetic adjustment of replacing $\log{n}$ by $m$ on the dyadic blocks. 

\begin{proof}[Proof of Theorem~\ref{thm:theta}]
    Fix $-\frac{1}{2}<\theta<\frac{1}{2}$. Let $\tau_m = 2^{(\frac{1}{2}-\theta)m}$ and define
    \[g_\theta(s) = \sum_{m=1}^\infty Q_m(s) \qquad \text{for} \qquad Q_m(s) = \frac{1}{m}\sum_{n=2^m}^{2^{m+1}-1} n^{i\tau_m}\,n^{-\frac{1}{2}-s}.\]
    It is plain that $g_\theta$ is in $\mathscr{H}^2$ and that $\sigmaa(g_\theta)=\frac{1}{2}$. We claim that it is enough to establish the upper bound
    \begin{equation} \label{eq:sigmactheta}
        \sigmac(g_\theta) \leq \theta
    \end{equation}
    and the lower bound
    \begin{equation} \label{eq:mutheta}
        \mu_{g_\theta}\left(\frac{1+2\theta}{4}\right) \geq \frac{1}{2}.
    \end{equation}
    Indeed, since $\mu_{g_\theta}(\sigma) \leq 1$ for $\sigma>\sigmac(g_\theta)$ and $\mu_{g_\theta}(\sigma)=0$ for $\sigma\geq\sigmaa(g_\theta)$, we get by convexity that
    \[\mu_{g_\theta}(\sigma) \leq \frac{\sigmaa(g_\theta)-\sigma}{\sigmaa(g_\theta)-\sigmac(g_\theta)} = \frac{\frac{1}{2}-\sigma}{\frac{1}{2}-\sigmac(g_\theta)} \leq \frac{1-2\sigma}{1-2\theta}\]
    by \eqref{eq:sigmactheta}, for $\theta < \sigma \leq \frac{1}{2}$. Since this upper bound is linear, $\mu_{g_\theta}$ is convex, and \eqref{eq:mutheta} shows that the bound is attained at the interior point $\frac{1+2\theta}{4}$, it follows that it is attained throughout. The same argument also shows that $\sigmac(g_\theta) < \theta$ would contradict \eqref{eq:mutheta}, whence $\sigmac(g_\theta) = \theta$.

    It therefore remains to establish \eqref{eq:sigmactheta} and \eqref{eq:mutheta}. Discarding finitely many blocks affects neither assertion. We may consequently restrict our attention to $m \geq m_0 = \lceil (\frac{1}{2}+\theta)^{-1}\rceil$, which means that
    \[\tau_m = 2^{(\frac{1}{2}-\theta)m} \leq 2^{m-1}.\]
    The point of this estimate is that we may apply Lemma~\ref{lem:vdc} to (parts of) the blocks $Q_m$. Taking $s=\sigma-i\tau_m$ with $\sigma\geq-\frac{1}{2}$ and $N=2^m$, we obtain
    \begin{equation} \label{eq:Qmsigma}
        \left\lvert\frac{1}{m}\sum_{n=2^m}^M n^{i\tau_m} n^{-\frac{1}{2}-\sigma}\right\rvert \leq \frac{C}{m}\,2^{m(\theta-\sigma)}
    \end{equation}
    for $2^m \leq M < 2^{m+1}$. A straightforward argument based on \eqref{eq:Qmsigma} and the triangle inequality shows that the Dirichlet series $g_\theta$ converges for $s=\sigma>\theta$, from which \eqref{eq:sigmactheta} follows by the well-known fact that Dirichlet series converge in half-planes.

    Turning now to \eqref{eq:mutheta}, we write $\sigma_0 = \frac{1+2\theta}{4}$. We shall evaluate $g_\theta$ at $\sigma_0 + i\tau_k$ for $k\geq m_0$. We first use the triangle inequality to extract the main contribution,
    \begin{equation} \label{eq:triangle}
        \lvert g_\theta(\sigma_0+i\tau_k) \rvert \geq Q_k(\sigma_0+i\tau_k) - \sum_{m \neq k} \lvert Q_m(\sigma_0+i\tau_k)\rvert.
    \end{equation}
    We need a lower bound for the first quantity and an upper bound for the second. The first is easy, since
    \begin{equation} \label{eq:lower}
        Q_k(\sigma_0+i\tau_k) = \frac{1}{k}\sum_{n=2^k}^{2^{k+1}-1} n^{-\frac{1}{2}-\sigma_0} \geq \frac{2^{-\frac{3+2\theta}{4}}}{k} \tau_k^{\frac{1}{2}}.
    \end{equation}
    For $m \neq k$ with $\tau_k \leq 2^{m-1}$, we have
    \[(1-2^{\theta-\frac{1}{2}})\max(\tau_m,\tau_k) \leq \lvert \tau_m - \tau_k\rvert \leq \max(\tau_m,\tau_k) \leq 2^{m-1}.\]
    Using Lemma~\ref{lem:vdc} with $N=2^m$, $M=2^{m+1}-1$, and $s=\sigma_0+i(\tau_k-\tau_m)$, as in \eqref{eq:Qmsigma}, we therefore obtain
    \[\lvert Q_m(\sigma_0+i\tau_k)\rvert \leq \frac{C}{1-2^{\theta-\frac{1}{2}}}\,\frac{\sqrt{\tau_m}}{m\,\max(\tau_m,\tau_k)}.\]
    Summing this over $m>k$ and over those $m<k$ with $\tau_k \leq 2^{m-1}$, and using
    \[\sum_{m>k} \frac{1}{\sqrt{\tau_m}} \leq \frac{1}{(2^{\frac{1-2\theta}{4}}-1)\tau_k^{\frac{1}{2}}} \qquad \text{and} \qquad \sum_{m<k} \sqrt{\tau_m} \leq \frac{\sqrt{\tau_k}}{2^{\frac{1-2\theta}{4}}-1},\]
    it follows that
    \begin{equation} \label{eq:upper1}
        \sum_{\substack{m \neq k \\ \tau_k \leq 2^{m-1}}} \lvert Q_m(\sigma_0+i\tau_k)\rvert \leq \frac{2C}{(1-2^{\theta-\frac{1}{2}})(2^{\frac{1-2\theta}{4}}-1)}\,\frac{1}{\sqrt{\tau_k}}.
    \end{equation}
    It remains to consider those $m$ with $\tau_k>2^{m-1}$, which satisfy $m<k$ since $k \geq m_0$ gives $m-1<(\frac{1}{2}-\theta)k\leq k-1$. Here Lemma~\ref{lem:vdc} is unavailable, and we use the triangle inequality instead, which gives
    \begin{equation} \label{eq:upper2}
        \sum_{2^{m-1}<\tau_k} \lvert Q_m(\sigma_0+i\tau_k)\rvert \leq \sum_{2^{m-1}<\tau_k} \frac{\sqrt{\tau_m}}{m} \leq \frac{2^{\frac{1}{2}-\theta}}{2^{\frac{1-2\theta}{4}}-1}\tau_k^{\frac{1-2\theta}{4}}.
    \end{equation}
    Inserting \eqref{eq:lower}, \eqref{eq:upper1}, and \eqref{eq:upper2} into \eqref{eq:triangle} and noting that $\frac{1-2\theta}{4}<\frac{1}{2}$, we can let $k\to \infty$ and obtain \eqref{eq:mutheta}.
\end{proof}

\bibliography{mocdiri}

\end{document}